\theoremstyle{plain} 
\newtheorem{theorem}             {Theorem} 
\newtheorem{corollary}  [theorem]{Corollary}
\newtheorem{algorithm}  [theorem]{Algorithm}
\theoremstyle{remark}
\DeclareMathOperator{\ind}{ind}
\def\modd#1 #2{#1\ \mbox{\rm (mod}\ #2\mbox{\rm )}}
\begin{document}

\author{Daniel Tsai}
\address{Nagoya University, Graduate School of Mathematics, 464-8602, Furocho, Chikusa-ku, Nagoya, Japan}
\email{dsai@outlook.jp}
\subjclass[2010]{Primary 11A63; Secondary 11A07, 11A25}

\title{Repeated concatenations in residue classes}

\begin{abstract}
We give an algorithm to determine all the repeated concatenations, in a given base, of a natural number in a residue class. The author recently describes a particular sequence of $v$-palindromes that inspires this investigation. We also generalize this sequence and discuss how there could be variations of the investigation we do in this paper.
\end{abstract}
\maketitle

\section{Introduction}
The author \cite{tsai,tsai18} defined the notion of a natural number being a {\em $v$-palindrome}, which we now define. Let $n\geq1$ be an integer. The number formed by writing the decimal digits of $n$ in reverse order is denoted by $r(n)$. The additive arithmetical function $v\colon \mathbb{N}\to \mathbb{Z}$ is characterized by having, for a prime power $p^\alpha$, $v(p^\alpha)=p$ if $\alpha=1$ and $v(p^\alpha)=p+\alpha$ if $\alpha\geq2$. Then, $n$ is a $v$-{\it palindrome} if $10\nmid n$, $n\ne r(n)$, and $v(n)=v(r(n))$.

The sequence of $v$-palindromes is A338039 in the {\it On-line Encyclopedia of Integer Sequences} (OEIS)~\cite{oeis}. In particular, all the repeated concatenations of $18$,
\begin{equation}\label{ten}
  18,1818,181818,\ldots,
\end{equation}
are $v$-palindromes. Other instances where all the repeated concatenations of a number are $v$-palindromes include
\begin{align}
  \label{lexus} 198,198198,198198198,\ldots, \\
  \label{new} 576,576576,576576576,\ldots.
\end{align}
Even if not all the repeated concatenations of a number are $v$-palindromes, some might be, and the pattern is periodic \cite[Theorem 1]{tsai}. The author \cite{tsai21} gave a method to determine whether a repeated concatenation of a number is a $v$-palindrome. For instance, a repeated concatenation of $117$ is $v$-palindromic if and only if the number of copies of $117$ is a multiple of $2054$ \cite[Table 4]{tsai21}. Using this method, lots of $v$-palindromes that are repeated concatenations can potentially be found. Indeed, the sequence of such $v$-palindromes is A338166 in OEIS \cite{oeis}.

Harminc and Sot\'ak \cite{harminc} showed that an arithmetic sequence $(a+bm)_{m\geq 0}$, where $a,b\geq 1$ are integers, contains a palindrome if and only if it is not the case that $a\equiv b\equiv \modd{0} {10}$, in which case $(a+bm)_{m\geq 0}$ will contain infinitely many palindromes. An arithmetic sequence cannot consist entirely of palindromes because, in fact, Pongsriiam \cite{pongsriiam} showed that the longest arithmetic progression of palindromes has length $10$. Analogously, we can consider whether an arithmetic sequence contains a $v$-palindrome. Since all the numbers \eqref{ten} are $v$-palindromes, one way to look for $v$-palindromes in an arithmetic sequence would be to look specifically for a repeated concatenation of $18$. The same can be said for \eqref{lexus} and \eqref{new}.

In this paper, instead of looking for $v$-palindromes (the structure of whose totality is still very mysterious) in an arithmetic sequence, we look for repeated concatenations of a number. Although this investigation is inspired by considering a result on palindromes \cite{harminc} analogously for $v$-palindromes, it is not directly about $v$-palindromes. Whether it is looking for palindromes in an arithmetic sequence \cite{harminc}, $v$-palindromes in an arithmetic sequence, or repeated concatenations of a number in an arithmetic sequence, these are all special instances of looking for terms in a sequence with a certain property. There are also investigations that find the density of the terms in a sequence with a certain property and involve palindromes \cite{banks,cilleruelo,cilleruelo13,luca}. 

We also generalize \eqref{ten} into Theorem \ref{general}, which relates $v$-palindromes and palindromes, and discuss in Section \ref{further} how Theorem \ref{general} could inspire variations of the investigation done in this paper. In Section \ref{the}, we make more precise what is to be investigated, namely, determining all the repeated concatenations of a number in a residue class; an arithmetic sequence is essentially a residue class, and we think in terms of residue classes rather than arithmetic sequences. In Section \ref{nota}, we fix some notation and make some conventions.

\subsection{The problem}\label{the}
We state our problem more precisely. Let $n\geq 1$ be an integer with base $b\geq 2$ representation $(a_{L-1}a_{L-2}\cdots a_0)_b$, where $0\leq a_{L-1},a_{L-2},\ldots,a_0<b$ are integers and $a_{L-1}\ne0$. Let $m\geq 1$ and $a$ be integers. Let the $k\geq 1$ times repeated concatenation in base $b$ of $n$ be denoted by $n(k)_b$. That is,
\begin{align}
  n(k)_b & =(\underbrace{a_{L-1}a_{L-2}\cdots a_0a_{L-1}a_{L-2}\cdots a_0\cdots\cdots a_{L-1}a_{L-2}\cdots a_0}_\text{$k$ copies of $a_{L-1}a_{L-2}\cdots a_0$})_b \nonumber \\
  & =n(1+b^L+\cdots + b^{(k-1)L})=n\cdot\frac{1-b^{Lk}}{1-b^L}. \label{wd}
\end{align}
For instance, $18(3)_{10}=181818$ but
\begin{equation*}
  18(3)_2=(10010)_2(3)_2=(100101001010010)_2=19026.
\end{equation*}
Our problem is to solve the congruence
\begin{equation}\label{tosolve}
  n(k)_b\equiv a\pmod{m}
\end{equation}
for integers $k\geq 1$. That is, to find all the repeated concatenations in base $b$ of $n$ in the residue class $a+m\mathbb{Z}$.
We give an algorithm (Algorithm \ref{crux}) in Section \ref{sol1}, to determine all $k\geq 1$ satisfying \eqref{tosolve}, when $m$ is a prime power. Then, we give Algorithm \ref{gen} in Section \ref{sol2} for general $m$, which simply consists of multiple applications of Algorithm \ref{crux}.  The set of all $k\geq 1$ satisfying \eqref{tosolve} will be denoted by $K$.

In Section \ref{conc}, we give a concrete example using Algorithm \ref{gen}.  In Section \ref{further}, we discuss how Theorem \ref{general} could suggest variations of the congruence \eqref{tosolve}.

\subsection{Notation and conventions}\label{nota}
We fix the following notation to be used.
\begin{itemize}
\item For integers $c\ne0$ and $\gamma\geq 0$ and a prime $p$, the notation $p^\gamma \parallel c$ means that $p^\gamma\mid c$ but $p^{\gamma+1}\nmid c$.
\item The Iverson symbol $[\cdot]$ is defined for a statement $P$ by $[P]=1$ or $[P]=0$ according as whether $P$ is true or false.
\item In a congruence relation modulo $m$, a notation $x^{-1}$ denotes an inverse of $x$ modulo $m$.
\item If $g$ is a primitive root modulo $m$ and $\gcd(x,m)=1$, then $\ind_{g,m} x$ denotes the index of $x$ to the base $g$ modulo $m$.
\end{itemize}   
We also make the following conventions for our algorithms.
\begin{itemize}
  \item Once an output is reached, the algorithm terminates.
  \item An output written as a condition on $k$ means that we output the set of all integers $k\geq 1$ satisfying that condition.
\end{itemize}

\section{When \texorpdfstring{$m=p^\alpha$}{m=pa} is a prime power}
\label{sol1}
We first consider the case when $m=p^\alpha$ is a prime power.
By \eqref{wd}, \eqref{tosolve} is equivalent to
\begin{equation}\label{con1}
  n\cdot \frac{1-b^{Lk}}{1-b^L} \equiv a\pmod{p^\alpha}.
\end{equation}
Put $d=\gcd(n,p^\alpha)$. If $d\nmid a$, then there is no solution for $k$, i.e., $K=\varnothing$. Thus assume that $d\mid a$. Then \eqref{con1} is equivalent to
\begin{equation*}
  \frac{n}{d}\cdot \frac{1-b^{Lk}}{1-b^L} \equiv \frac{a}{d}\pmod{\frac{p^\alpha}{d}},
\end{equation*}
which is equivalent to
\begin{equation}\label{above}
  \frac{1-b^{Lk}}{1-b^L} \equiv \frac{a}{d}\cdot \left(\frac{n}{d}\right)^{-1}\pmod{\frac{p^\alpha}{d}}.
\end{equation}
Put $p^\alpha/d=p^{\alpha_1}$ and let $a_1\equiv a/d\cdot \modd{(n/d)^{-1}} {p^{\alpha_1}}$. Then \eqref{above} is equivalent to
\begin{equation}\label{above2}
   \frac{1-b^{Lk}}{1-b^L} \equiv a_1\pmod{p^{\alpha_1}}.
\end{equation}
Suppose that $p^\beta\parallel 1-b^L$, then \eqref{above2} is equivalent to
\begin{equation*}
  \frac{1-b^{Lk}}{p^\beta} \equiv a_1\cdot \frac{1-b^L}{p^\beta}\pmod{p^{\alpha_1}},
\end{equation*}
or equivalently,
\begin{equation*}
1-b^{Lk} \equiv a_1 (1-b^L)\pmod{p^{\alpha_1+\beta}},
\end{equation*}
or equivalently,
\begin{equation}\label{above3}
  b^{Lk}\equiv 1-a_1(1-b^L)\pmod{p^{\alpha_1+\beta}}.
\end{equation}
Put $p^{\alpha_1+\beta}=p^{\alpha_2}$ and let $a_2\equiv \modd{1-a_1(1-b^L)} {p^{\alpha_2}}$. Then \eqref{above3} is equivalent to
\begin{equation}\label{last}
  b^{Lk}\equiv a_2\pmod{p^{\alpha_2}}.
\end{equation}
If $\alpha_2 = 0$, then $K=\mathbb{N}$. Thus assume that $\alpha_2\geq 1$. There will be two cases, according to as whether there is not or is a primitive root modulo $p^{\alpha_2}$, and we consider them in Sections \ref{no} and \ref{is}, respectively. Recall that there is no primitive root modulo $p^{\alpha_2}$ if and only if $p=2$ and $\alpha_2\geq 3$.

\subsection{In case \texorpdfstring{$p=2$}{p=2} and \texorpdfstring{$\alpha_2\geq 3$}{a2>=3}}\label{no}
In case $p=2$ and $\alpha_2\geq 3$, the congruence \eqref{last} is equivalent to
\begin{equation}\label{bec1}
  b^{Lk}\equiv a_2\pmod{2^{\alpha_2}}.
\end{equation}
If $b\not\equiv \modd{a_2} {2}$, then $K=\varnothing$. Thus assume that $b\equiv \modd{a_2} {2}$. We consider the cases $b\equiv a_2\equiv \modd{0} {2}$ and $b\equiv a_2\equiv \modd{1} {2}$ in the next two paragraphs, respectively.

In case $b\equiv a_2\equiv\modd{0} {2}$, write $b=2^\delta b_1$, where $2^\delta \parallel b$. If $a_2\equiv\modd{0} {2^{\alpha_2}}$, then $K = \{k\in\mathbb{N}\colon k\geq \alpha_2/(\delta L)\}$. Thus assume that $a_2\not\equiv\modd{0} {2^{\alpha_2}}$. Write $a_2=2^\varepsilon a_3$, where  $2^\varepsilon \parallel a_2$. Then \eqref{bec1} is equivalent to
\begin{equation}\label{mid}
  2^{\delta L k}b^{Lk}_1\equiv 2^\varepsilon a_3\pmod{2^{\alpha_2}}.
\end{equation}
Since $a_2\not\equiv\modd{0} {2^{\alpha_2}}$, $\varepsilon<\alpha_2$,
therefore \eqref{mid} implies that
\begin{equation*}
  2^{\delta L k}b^{Lk}_1\equiv 0\pmod{2^{\varepsilon}}.
\end{equation*}
Hence, we need to have $\delta L k\geq \varepsilon$. Now assume that $\delta L k\geq  \varepsilon$. Then \eqref{mid} holds if and only if
\begin{equation}\label{sub}
  2^{\delta L k-\varepsilon}b^{Lk}_1\equiv a_3\pmod{2^{\alpha_2-\varepsilon}}.
\end{equation}
If $\delta L k>\varepsilon$, then the above congruence cannot hold because the two sides are of opposite parity. Hence, we need to have $k=\varepsilon / (\delta L)$. If $\varepsilon/(\delta L)$ is not an integer, then $K=\varnothing$. Thus assume that $\varepsilon/(\delta L)$ is an integer. Letting $k=\varepsilon/(\delta L)$, \eqref{sub} becomes
\begin{equation}\label{ho}
  b^{\varepsilon/\delta}_1\equiv a_3\pmod{2^{\alpha_2-\varepsilon}}.
\end{equation}
If \eqref{ho} holds, then $K=\{\varepsilon/(\delta L)\}$, otherwise $K=\varnothing$.

In case $b\equiv a_2\equiv\modd{1} {2}$, by the structure of $(\mathbb{Z}/2^{\alpha_2}\mathbb{Z})^\times$, there exist unique integers $0\leq\mu_1,\mu_2<2$ and $0\leq \nu_1,\nu_2<2^{\alpha_2-2}$ such that
$b\equiv \modd{(-1)^{\mu_1}5^{\nu_1}} {2^{\alpha_2}}$ and $a_2\equiv \modd{(-1)^{\mu_2}5^{\nu_2}} {2^{\alpha_2}}$. Hence, \eqref{bec1} is equivalent to
\begin{equation*}
  (-1)^{\mu_1 Lk}5^{\nu_1 Lk}\equiv (-1)^{\mu_2}5^{\nu_2}\pmod{2^{\alpha_2}},
\end{equation*}
which holds if and only if both of the congruences
\begin{align}
  \label{first}\mu_1 Lk & \equiv \mu_2\pmod{2}, \\
  \label{second} \nu_1 Lk &\equiv \nu_2\pmod{2^{\alpha_2-2}}
\end{align}
hold.
We solve this system of congruences for $k$. If $\mu_1 L$ is even and $\mu_2$ odd, then \eqref{first} cannot hold, thus $K=\varnothing$. Thus assume that $K\neq \varnothing$. We divide into two cases as follows.
\begin{itemize}
  \item[{\rm (i)}] \label{step1} {\bf If $\mu_1 L$ is odd}: \eqref{first} is equivalent to $k\equiv \modd{\mu_2} {2}$. We solve \eqref{second} in the usual way. Put $f=\gcd(\nu_1 L,2^{\alpha_2-2})$.  If $f\nmid \nu_2$, then \eqref{second} cannot hold, thus $K=\varnothing$. Thus assume that $f\mid \nu_2$. Then \eqref{second} is equivalent to
  \begin{equation}\label{implies}
    k\equiv \frac{\nu_2}{f}\left(\frac{\nu_1L}{f}\right)^{-1}\pmod{\frac{2^{\alpha_2-2}}{f}}.
  \end{equation}
  If $2^{\alpha_2-2}/f=1$, then \eqref{implies} always hold, and so
  \begin{equation}
    K=\{k\in\mathbb{N}\colon k\equiv\modd{\mu_2} {2}\}.
  \end{equation}
  Thus assume that $2^{\alpha_2-2}/f>1$. Then \eqref{implies} implies that
  \begin{equation*}
    k\equiv \frac{\nu_2}{f}\left(\frac{\nu_1L}{f}\right)^{-1} \equiv \frac{\nu_2}{f}\pmod{2}.
  \end{equation*}
  If
  \begin{equation}\label{if}
    \mu_2\equiv  \frac{\nu_2}{f}\pmod{2},
  \end{equation}
  then
  \begin{equation*}
  K=\left\{k\in\mathbb{N}\colon k\equiv \frac{\nu_2}{f}\left(\frac{\nu_1L}{f}\right)^{-1}\pmod{\frac{2^{\alpha_2-2}}{f}}\right\}.
  \end{equation*}
  If \eqref{if} does not hold, then $K=\varnothing$.
  \item[{\rm (ii)}] {\bf If $\mu_1L$ and $\mu_2$ are both even}: \eqref{first} always hold, so we are left with solving just \eqref{second}, which we do as in the second to sixth sentences in case {\rm (i)}.
\end{itemize}

\subsection{In case \texorpdfstring{$p$}{p} is odd or \texorpdfstring{$\alpha_2<3$}{a2<3}}\label{is}
We now consider the case when $p$ is odd or $\alpha_2<3$. The congruence \eqref{last} implies that $b^{Lk}\equiv \modd{a_2} {p}$. Consequently, if $[p\mid b]\ne[p\mid a_2]$, then $K=\varnothing$. Thus assume that $[p\mid b]=[p\mid a_2]$. In case $[p\mid b]=[p\mid a_2]=1$, we solve \eqref{last} in the same way as in the case when $p=2$, $\alpha_2\geq 3$, and $b\equiv a_2\equiv\modd{0} {2}$, described in the second paragraph of Section \ref{no}. Thus assume that $[p\mid b]=[p\mid a_2]=0$.

Let $g$ be a primitive root modulo $p^{\alpha_2}$. Then \eqref{last} is equivalent to
\begin{equation}\label{just}
  Lk\ind_{g,p^{\alpha_2}} b\equiv \ind_{g,p^{\alpha_2}} a_2\pmod{p^{\alpha_2-1}(p-1)}.
\end{equation}
So we just have to solve \eqref{just}, which we do in the usual way. Put
\begin{equation*}
  f=\gcd(L\ind_{g,p^{\alpha_2}} b,p^{\alpha_2-1}(p-1)).
\end{equation*}
If $f\nmid \ind_{g,p^{\alpha_2}} a_2$, then \eqref{just} cannot hold, thus $K=\varnothing$. Thus assume that $f\mid \ind_{g,p^{\alpha_2}} a_2$. Then \eqref{just} is equivalent to
  \begin{equation*}
    k\equiv \frac{\ind_{g,p^{\alpha_2}} a_2}{f}\left(\frac{L\ind_{g,p^{\alpha_2}} b}{f}\right)^{-1}\pmod{\frac{p^{\alpha_2-1}(p-1)}{f}},
  \end{equation*}
  and so
  \begin{equation*}
    K=\left\{k\in\mathbb{N}\colon k\equiv \frac{\ind_{g,p^{\alpha_2}} a_2}{f}\left(\frac{L\ind_{g,p^{\alpha_2}} b}{f}\right)^{-1}\pmod{\frac{p^{\alpha_2-1}(p-1)}{f}}\right\}.
  \end{equation*}

\subsection{Algorithm when \texorpdfstring{$m=p^\alpha$}{m=pa} is a prime power}\label{alg1}
Up to this point in Section \ref{sol1}, we have shown how to determine all $k\geq 1$ satisfying \eqref{tosolve}, when $m$ is a prime power. We now summarize the process into the following algorithm. 
\begin{algorithm}\label{crux}
  Given integers $n\geq 1$, $b\geq 2$, $a\in\mathbb{Z}$, and a prime power $m=p^\alpha$, this algorithm computes the set $K$ of integers $k\geq 1$ satisfying \eqref{tosolve}.
  \begin{itemize}
    \item[{\rm (I)}] Put $d=\gcd(n,p^\alpha)$. If $d\nmid a$, output $K=\varnothing$.
    \item[{\rm (II)}] Let the number of base $b$ digits of $n$ be denoted by $L$. Put $p^\alpha/d=p^{\alpha_1}$ and suppose that $p^\beta\parallel 1-b^L$. Put $\alpha_2=\alpha_1+\beta$. If $\alpha_2=0$, output $K=\mathbb{N}$. Let $a_1,a_2\in\mathbb{Z}$ be such that
    \begin{align*}
      a_1&\equiv \frac{a}{d}\cdot\left(\frac{n}{d}\right)^{-1}\pmod{p^{\alpha_1}}, \\
      a_2&\equiv 1-a_1(1-b^L)\pmod{p^{\alpha_2}}.
    \end{align*}
    If $p$ is odd or $\alpha_2<3$, go to step {\rm (XII)}.
    \item[{\rm (III)}] If $b\not\equiv \modd{a_2} {2}$, output $K=\varnothing$. If $b\equiv a_2\equiv \modd{1} {2}$, go to step {\rm (VII)}.
    \item[{\rm (IV)}] Suppose that $p^\delta\parallel b$. If $a_2\equiv \modd{0} {p^{\alpha_2}}$, output $k\geq \alpha_2/(\delta L)$.
    \item[{\rm (V)}] Suppose that $p^\varepsilon\parallel a_2$. If $\delta L\nmid \varepsilon$, output $K=\varnothing$.
    \item[{\rm (VI)}] If $b^{\varepsilon/\delta}\equiv \modd{a_2} {p^{\alpha_2}}$, output $k=\varepsilon/(\delta L)$. Output $K=\varnothing$.
    \item[{\rm (VII)}] Let $0\leq \mu_1,\mu_2<2$ and $0\leq \nu_1,\nu_2<2^{\alpha_2-2}$ be integers such that
    \begin{align*}
      b&\equiv(-1)^{\mu_1}5^{\nu_1}\pmod{2^{\alpha_2}}, \\
      a_2&\equiv (-1)^{\mu_2}5^{\nu_2}\pmod{2^{\alpha_2}}.
    \end{align*}
    If $2\mid\mu_1 L$ and $2\nmid \mu_2$, output $K=\varnothing$.
    \item[{\rm (VIII)}] Put $f=\gcd(\nu_1 L, 2^{\alpha_2-2})$. If $f\nmid \nu_2$, output $K=\varnothing$. If $2\nmid\mu_1 L$, go to step {\rm (X)}.
    \item[{\rm (IX)}] Output 
    \begin{equation*}
      k\equiv \frac{\nu_2}{f}\left(\frac{\nu_1L}{f}\right)^{-1} \pmod{ \frac{2^{\alpha_2-2}}{f}}.
    \end{equation*}
    \item[{\rm (X)}] If $f=2^{\alpha_2-2}$, output $k\equiv \modd{\mu_2} {2}$.     
    \item[{\rm (XI)}] If $\mu_2\not\equiv\modd{\frac{\nu_2}{f}} {2}$, output $K=\varnothing$. Go to step {\rm (IX)}.
    \item[{\rm (XII)}] If $[p\mid b]\neq[p\mid a_2]$, output $K=\varnothing$. If $[p\mid b]=[p\mid a_2]=1$, go to step {\rm (IV)}.
    \item[{\rm (XIII)}] Let $g$ be a primitive root modulo $p^{\alpha_2}$ and put $f=\gcd(L\ind_{g,p^{\alpha_2}} b, p^{\alpha_2-1}(p-1))$. If $f\nmid \ind_{g,p^{\alpha_2}} a_2$, output $K=\varnothing$.
    \item[{\rm (XIV)}] Output
    \begin{equation*}
      k\equiv \frac{\ind_{g,p^{\alpha_2}} a_2}{f}\left(\frac{L \ind_{g,p^{\alpha_2}} b}{f}\right)^{-1}\pmod{\frac{p^{\alpha_2-1}(p-1)}{f}}.
    \end{equation*}
  \end{itemize}
\end{algorithm}

\section{For general modulus \texorpdfstring{$m$}{m}}
\label{sol2}
The case when $m=p^\alpha$ is a prime power is treated in Section \ref{sol1}. We now solve the congruence \eqref{tosolve} for $k$, for a general modulus $m$. When $m=1$, clearly $K=\mathbb{N}$. Thus assume that $m>1$. Let the canonical factorization of $m$ be $m=p^{\alpha_1}_1\cdots p^{\alpha_r}_r$. Then the congruence \eqref{tosolve} is the conjunction of
\begin{equation}\label{sat}
  n(k)_b\equiv a\pmod{p^{\alpha_j}_j},
\end{equation}
for $1\leq j\leq r$.
For each $1\leq j\leq r$, we can solve the above congruence for $k$ by the process of Section \ref{sol1}, i.e., Algorithm \ref{crux}, obtaining a solution set $K_j$. Consequently, $K=K_1\cap\cdots\cap K_r$. In actually finding $K$, we can use the Chinese remainder theorem. We summarize this into the following algorithm.
\begin{algorithm}\label{gen}
  Given integers $n\geq 1$, $b\geq 2$, $a\in\mathbb{Z}$, and $m\geq 1$, this algorithm computes the set $K$ of integers $k\geq 1$ satisfying \eqref{tosolve}.
  \begin{itemize}
    \item[{\rm (I)}] If $m=1$, output $K=\mathbb{N}$.
    \item[{\rm (II)}] Let the canonical factorization of $m$ be $m=p^{\alpha_1}_1\cdots p^{\alpha_r}_r$. For each $1\leq j\leq r$, compute the set $K_j$ of integers $k\geq 1$ satisfying \eqref{sat} by using Algorithm \ref{crux}. Output $K=K_1\cap\cdots\cap K_r$.
  \end{itemize}
\end{algorithm}

\section{A concrete example}
\label{conc}
In this section, we give a concrete example using Algorithm \ref{gen}. Consider the congruence
\begin{equation}\label{exn}
  18(k)_3\equiv 2 \pmod{208}.
\end{equation}
We find the set $K$ of integers $k\geq 1$ satisfying the above congruence by using Algorithm \ref{gen} with $n=18$, $b=3$, $a=2$, and $m=208$. Since $m>1$, we go to step (II). We have the canonical factorization $208=2^4\cdot 13$. In Sections \ref{K1} and \ref{K2}, by using Algorithm \ref{crux}, we find the sets $K_1$ and $K_2$ of integers $k\geq 1$ satisfying the congruences
\begin{equation*}
  18(k)_3\equiv 2\pmod{2^4}\quad\text{and} \quad 18(k)_3\equiv 2\pmod{13},
\end{equation*}
respectively. Then, in Section \ref{K}, we consider $K=K_1\cap K_2$.

\subsection{Computation of \texorpdfstring{$K_1$}{K1}}\label{K1}

We use Algorithm \ref{crux} with $n=18$, $b=3$, $a=2$, and $m=2^4$.
\begin{itemize}
  \item[{\rm (I)}] Put $d=\gcd(18,2^4)=2$. Since $d=2\mid 2=a$, we go to step (II).
  \item[{\rm (II)}] Since $18 = 200_3$, $L=3$. Since $2^4/2=2^3$, $\alpha_1=3$. Since $1-b^L=1-3^3=-26$, $\beta=1$. Put $\alpha_2=\alpha_1+\beta=3+1=4\ne 0$. Since
  \begin{align*}
    \frac{2}{2}\cdot\left(\frac{18}{2}\right)^{-1}&=9^{-1}\equiv1^{-1}\equiv1\pmod{2^3}, \\
    1-1\cdot(-26)&=1+26 \equiv-5\pmod{2^4},
  \end{align*}   
  we can choose $a_1=1$ and $a_2=-5$. Since $p=2$ and $\alpha_2=4\geq 3$, we go to step (III).
  \item[{\rm (III)}] Since $3\equiv -5\equiv \modd{1} {2}$, we go to step (VII).
  \item[{\rm (VII)}] Since
  \begin{align*}
    b&=3\equiv (-1)^1\cdot 5^3\pmod{2^4}, \\
    a_2&=-5\equiv (-1)^1\cdot 5^1\pmod{2^4},
  \end{align*}
  $\mu_1=\mu_2=1$, $\nu_1=3$, and $\nu_2=1$. Since $2\nmid3=\mu_1 L$, we go to step (VIII).
  \item[{\rm (VIII)}] Put $f=\gcd(\nu_1L, 2^{\alpha_2-2})=\gcd(9,2^{2})=1$. Then $f=1\mid \nu_2$. Since $2\nmid 3=\mu_1L$, we go to step (X).
  \item[{\rm (X)}] Since $f=1<2^2=2^{\alpha_2-2}$, we go to step (XI).
  \item[{\rm (XI)}] Since $\mu_2=\nu_2/f$,
  we go to step (IX).
  \item[{\rm (IX)}] Since
  \begin{equation*}
    \frac{1}{1}\cdot\left(\frac{9}{1}\right)^{-1}\equiv 1\pmod{2^2},
  \end{equation*}
  we obtain that $k\equiv \modd{1} {4}$.
\end{itemize}
Therefore we have computed that
\begin{equation*}
  K_1=\{k\in\mathbb{N}\colon k\equiv 1\pmod{4}\}.
\end{equation*}

\subsection{Computation of \texorpdfstring{$K_2$}{K2}}\label{K2}

We use Algorithm \ref{crux} with $n=18$, $b=3$, $a=2$, and $m=13$.
\begin{itemize}
  \item[{\rm (I)}] Put $d=\gcd(18,13)=1$. Since $d=1\mid 2=a$, we go to step (II).
  \item[{\rm (II)}] Since $18 = 200_3$, $L=3$. Since $13/1=13^1$, $\alpha_1=1$. Since $1-b^L=-26$, $\beta=1$. Put $\alpha_2=\alpha_1+\beta=1+1=2\ne0$. Since
  \begin{align*}
    \frac{2}{1}\cdot\left(\frac{18}{1}\right)^{-1}&\equiv 2\cdot 5^{-1}\equiv3 \pmod{13}, \\
    1-3(-26)&=1+3\cdot 26\equiv 79\pmod{13^2},
  \end{align*}
  we can choose $a_1=3$ and $a_2=79$. Since $p=13$, we go to step (XII).
  \item[{\rm (XII)}] Since $[13\mid 3]=[13\mid 79]=0$, we go to step (XIII).
  \item[{\rm (XIII)}] A primitive root modulo $13^2$ is $g=2$. We have $\ind_{2,13^2} 3=124$ and $\ind_{2,13^2} 79 = 24$. Put $f=\gcd(3\cdot 124,13\cdot 12)=12$. Since $f=12\mid 24=\ind_{2,13^2} 79$, we go to step (XIV).
  \item[{\rm (XIV)}] Since
  \begin{equation*}
    \frac{24}{12}\cdot\left(\frac{3\cdot124}{12}\right)^{-1}=2\cdot 31^{-1}\equiv  2\cdot 5^{-1}\equiv3  \pmod{13},
  \end{equation*}
  we obtain that $k\equiv \modd{3} {13}$.
\end{itemize}
Therefore we have computed that
\begin{equation*}
  K_2=\{k\in\mathbb{N}\colon k\equiv 3\pmod{13}\}.
\end{equation*}

\subsection{Computation of \texorpdfstring{$K$}{K}}\label{K}

In Sections \ref{K1} and \ref{K2}, we computed respectively that $K_1=\{k\in\mathbb{N}\colon k\equiv \modd{1} {4}\}$ and $K_2=\{k\in\mathbb{N}\colon k\equiv \modd{3} {13}\}$. By the Chinese remainder theorem, $K=K_1\cap K_2=\{k\in\mathbb{N}\colon k\equiv \modd{29} {52}\}$.

Therefore we showed that, for $k\geq1$, \eqref{exn} holds if and only if $k\equiv \modd{29} {52}$. In other words, because $18 = 200_3$, what we showed is that if we repeatedly concatenate $k$ times the digits $200$ and consider the resulting number $R$ in base $3$, then $R\equiv \modd{2} {208}$ if and only if $k\equiv \modd{29} {52}$.

\section{A generalization of \texorpdfstring{\eqref{ten}}{ten}}
\label{sec-5}
  We said in the Introduction that all the repeated concatenations \eqref{ten} of $18$ are $v$-palindromes. In this section, we prove a generalization, Theorem \ref{general}, of \eqref{ten}. Then, we deduce two corollaries, including \eqref{ten}.
  \begin{theorem}\label{general}
  If $\rho$ is a palindrome in base $10$ whose digits consist entirely of $0$'s and $1$'s, then $18\rho$ is a $v$-palindrome.
\end{theorem}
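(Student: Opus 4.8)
The plan is to read $18\rho$ as the ordinary product $18\cdot\rho$ (this is consistent with the Introduction, where the repeated concatenations $18,1818,181818,\dots$ are exactly $18\cdot\rho$ for $\rho\in\{1,101,10101,\dots\}$), and to reduce the whole statement to one identity about decimal reversal together with a one-line computation with the function $v$. The engine is the observation that multiplying a $0$--$1$ number by $18$ produces no carries, and that reversing the result is the same as multiplying by $81$.

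First I would set up the digits. Write $\rho=\sum_{i=0}^{k-1}d_i10^i$ with each $d_i\in\{0,1\}$ and $d_i=d_{k-1-i}$ by the palindrome hypothesis, so in particular $d_{k-1}=d_0=1$ and $\rho$ is odd. Since $18\cdot10^i$ places an $8$ in position $i$ and a $1$ in position $i+1$, summing over the indices $i$ with $d_i=1$ shows that the position-$j$ digit of $18\rho$ is $8d_j+d_{j-1}$ (with the convention $d_{-1}=d_k=0$). Because $8d_j+d_{j-1}\in\{0,1,8,9\}$, no carrying occurs, so these really are the base-$10$ digits of $18\rho$; the number has exactly $k+1$ digits and units digit $8$.

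Next I would compute the reversal. As $18\rho$ has $k+1$ digits, $r(18\rho)$ carries in position $j$ the digit $8d_{k-j}+d_{k-j-1}$, and the palindrome relations $d_{k-j}=d_{j-1}$, $d_{k-j-1}=d_j$ turn this into $d_j+8d_{j-1}$, which is exactly the (again carry-free) position-$j$ digit of $81\rho$. Hence $r(18\rho)=81\rho$. With this identity the three defining conditions are routine: $18\rho$ ends in $8$ and $5\nmid\rho$, so $10\nmid18\rho$; $18\rho<81\rho$, so $18\rho\ne r(18\rho)$; and writing $\rho=3^{c}s$ with $\gcd(s,6)=1$, additivity of $v$ gives $v(18\rho)=v(2)+v(3^{c+2})+v(s)=c+7+v(s)$ and $v(81\rho)=v(3^{c+4})+v(s)=c+7+v(s)$. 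The point is that discarding the factor $2$ (worth $v(2)=2$) is exactly offset by raising the exponent of $3$ by two (worth $+2$, as that exponent stays $\ge2$), so $v(18\rho)=v(r(18\rho))$ and $18\rho$ is a $v$-palindrome.

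The main obstacle is the carry analysis underlying $r(18\rho)=81\rho$: everything downstream is immediate once one knows the digits of $18\rho$ and $81\rho$ are literally $8d_j+d_{j-1}$ and $d_j+8d_{j-1}$. I expect the only real care to be at the extreme positions—confirming the units digit is $8$ (so no trailing zeros spoil the reversal) and that the leading digit sits in position $k$—and in recording that $\rho$ is odd and not divisible by $5$ so that $10\nmid18\rho$. The $v$-computation itself rests on the numerical coincidence that $v(2)=2$ matches the two-step increase in the exponent of $3$, which is precisely why the multiplier $18=2\cdot3^2$ (with reversal $81=3^4$) makes the theorem work.
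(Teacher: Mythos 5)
Your proposal is correct and takes essentially the same route as the paper: both rest on the key identity $r(18\rho)=81\rho$ (with the overlap $8+1=9$ producing the $9$'s) followed by the observation that $v(2\cdot 3^{2+\alpha})=v(3^{4+\alpha})=7+\alpha$, so that dropping the factor $2$ is offset by raising the exponent of $3$ by two. The only difference is bookkeeping: the paper writes $\rho$ as palindromic blocks of ones and zeros and displays the digit strings of $18\rho$ and $81\rho$ explicitly, whereas your carry-free column-sum formula $8d_j+d_{j-1}$ versus $d_j+8d_{j-1}$ establishes the same digit identity position by position.
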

\begin{proof}
  When read from left to right, $\rho$ must be formed by $a_1$ ones, followed by $a_2$ zeros, followed by $a_3$ ones, and so on until lastly, $a_{2r-1}$ ones, where $r,a_1,a_2,\ldots,a_{2r-1}$ are positive integers such that $a_i=a_{2r-i}$ for $1\leq i\leq 2r-1$. Writing $\rho$ out,
  \begin{equation*}
    \rho=\underbrace{1\cdots 1}_\text{$a_1$ ones}\overbrace{0\cdots 0}^\text{$a_2$ zeros}\underbrace{1\cdots 1}_\text{$a_3$ ones}\cdots\cdots\underbrace{1\cdots 1}_\text{$a_3$ ones}\overbrace{0\cdots 0}^\text{$a_2$ zeros}\underbrace{1\cdots 1}_\text{$a_1$ ones}.
  \end{equation*}
  Thus
  \begin{align*}
    18\rho&=1\underbrace{9\cdots9}_\text{$a_1-1$}8\overbrace{0\cdots0}^\text{$a_2-1$}1\underbrace{9\cdots9}_\text{$a_3-1$}8\cdots\cdots1\underbrace{9\cdots9}_\text{$a_3-1$}8\overbrace{0\cdots0}^\text{$a_2-1$}1\underbrace{9\cdots9}_\text{$a_1-1$}8, \\
    81\rho&=8\underbrace{9\cdots9}_\text{$a_1-1$}1\overbrace{0\cdots0}^\text{$a_2-1$}8\underbrace{9\cdots9}_\text{$a_3-1$}1\cdots\cdots8\underbrace{9\cdots9}_\text{$a_3-1$}1\overbrace{0\cdots0}^\text{$a_2-1$}8\underbrace{9\cdots9}_\text{$a_1-1$}1,
  \end{align*}
  and we see that $r(18\rho)=81\rho\ne 18\rho$. Clearly $10\nmid 18\rho$. Now suppose that $3^\alpha\parallel \rho$ and write $\rho=3^\alpha m$. Then
  \begin{align*}
    v(18\rho) &=v(2\cdot 3^2\cdot 3^\alpha m)=v(2\cdot 3^{2+\alpha} m)=v(2\cdot 3^{2+\alpha}) +v(m), \\    
    v(81\rho)&=v(3^4\cdot3^{\alpha}m)=v(3^{4+\alpha}m)=v(3^{4+\alpha})+v(m).
  \end{align*}
  Since $v(2\cdot 3^{2+\alpha})=v(3^{4+\alpha})=7+\alpha$, we see that $v(18\rho)=v(81\rho)$. Therefore $18\rho$ is a $v$-palindrome.
\end{proof}

We now deduce two corollaries from Theorem \ref{general}, the first of which is \eqref{ten}.
\begin{corollary}\label{tenc}
  All the repeated concatenations of $18$,
  \begin{equation*}
    18, 1818, 181818,\ldots,
  \end{equation*}
  are $v$-palindromes.
\end{corollary}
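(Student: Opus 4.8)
The plan is to deduce Corollary \ref{tenc} directly from Theorem \ref{general} by exhibiting, for each repeated concatenation of $18$, a suitable palindrome $\rho$ whose digits are all $0$'s and $1$'s such that $18\rho$ equals that concatenation. The key observation is that the repeated concatenation $18(k)_{10}$ arises by multiplying $18$ by a number of the form $1\,0^{?}\,1\,0^{?}\cdots 1$, so I would like to realize each term of the sequence as $18\rho$ for an appropriate $\rho$.

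First I would recall from \eqref{wd} that $18(k)_{10}=18\cdot(1+10^2+10^4+\cdots+10^{2(k-1)})$, since $18$ has $L=2$ base $10$ digits. Setting
\begin{equation*}
  \rho=1+10^2+10^4+\cdots+10^{2(k-1)},
\end{equation*}
I observe that $\rho$ is precisely the number whose decimal representation is $k$ ones separated by single zeros, namely $\underbrace{1010\cdots101}_{\text{$k$ ones}}$ read from left to right. Next I would verify that this $\rho$ satisfies the hypotheses of Theorem \ref{general}: its digits are entirely $0$'s and $1$'s by construction, and it is a palindrome because reversing the digit string $1010\cdots101$ leaves it unchanged (the pattern is symmetric about its center). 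Then Theorem \ref{general} immediately yields that $18\rho=18(k)_{10}$ is a $v$-palindrome.

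The final step is simply to note that $k\geq 1$ was arbitrary, so every term $18,1818,181818,\ldots$ of the sequence is of the form $18\rho$ with $\rho$ satisfying the hypotheses of Theorem \ref{general}, and hence every term is a $v$-palindrome, as claimed.

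I expect the main (and only real) obstacle to be the bookkeeping in the second step: namely, confirming cleanly that $1+10^2+\cdots+10^{2(k-1)}$ has exactly the digit string $1010\cdots101$ and that this string is genuinely a palindrome consisting only of $0$'s and $1$'s. In the language of Theorem \ref{general}'s proof, this corresponds to the degenerate case $a_1=a_2=\cdots=a_{2k-1}=1$ (with $r=k$), so no digit carries occur and the palindrome condition $a_i=a_{2k-i}$ holds trivially. Once this identification is made, the corollary follows at once from Theorem \ref{general}.
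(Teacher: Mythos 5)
Your proof is correct and takes essentially the same approach as the paper: the paper's proof is the one-liner ``take $\rho=1010\cdots0101$ with $0$ and $1$ alternating in Theorem \ref{general},'' which is exactly your $\rho=1+10^2+\cdots+10^{2(k-1)}$. Your additional verification that this $\rho$ arises from \eqref{wd} with $L=2$ and is a $0$-$1$ palindrome (the case $r=k$, all $a_i=1$) is a correct, slightly more explicit rendering of the same argument.
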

\begin{proof}
  Take $\rho$ to be of the form $\rho=1010\cdots 0101$, with $0$ and $1$ alternating, in Theorem \ref{general}.
\end{proof}
\begin{corollary}\label{ac}
  All the numbers,
  \begin{equation*}
    1818, 18018, 180018,1800018,\ldots,
  \end{equation*}
  are $v$-palindromes.
\end{corollary}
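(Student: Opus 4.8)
The plan is to realize each listed number as a product $18\rho$ with $\rho$ satisfying the hypotheses of Theorem \ref{general}, mirroring the proof of Corollary \ref{tenc}. First I would identify the relevant $\rho$ by factoring out $18$:
\begin{equation*}
  1818 = 18\cdot 101,\quad 18018 = 18\cdot 1001,\quad 180018 = 18\cdot 10001,\quad \ldots,
\end{equation*}
so that the general term is $18\rho$ where $\rho = 1\underbrace{0\cdots 0}_{j}1$ consists of two $1$'s separated by $j$ zeros, for $j \geq 1$.

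The key step is to observe that every such $\rho$ is a palindrome in base $10$ whose digits are entirely $0$'s and $1$'s: reading $10\cdots01$ forwards or backwards yields the same string, and only the digits $0$ and $1$ occur. Thus each $\rho$ meets the hypotheses of Theorem \ref{general}, and I would conclude at once that $18\rho$ is a $v$-palindrome. As $j$ runs through $1, 2, 3, \ldots$, the numbers $18\rho$ run through $1818, 18018, 180018, \ldots$, which is exactly the list in the statement.

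I do not expect any real obstacle; the corollary is a direct specialization of Theorem \ref{general}, just as Corollary \ref{tenc} is the specialization to the alternating family $\rho = 1010\cdots 0101$. The only point requiring a line of checking is the arithmetic identity confirming that $18\rho$ is indeed the listed number, namely $18(10^{j+1}+1) = 18\cdot 10^{j+1} + 18$, which places an $18$ at the top and an $18$ at the bottom with $j-1$ zeros in between.
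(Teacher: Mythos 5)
Your proposal is correct and matches the paper's proof exactly: the paper likewise takes $\rho = 100\cdots001$, with only the first and last digits equal to $1$ and at least one $0$, and applies Theorem \ref{general}. Your additional verification that $18(10^{j+1}+1) = 18\cdot 10^{j+1} + 18$ produces the listed numbers is a harmless elaboration of what the paper leaves implicit.
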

\begin{proof}
  Take $\rho$ to be of the form $\rho=100\cdots 001$, with only the first and last digits being $1$ and at least one $0$, in Theorem \ref{general}.
\end{proof}

\section{Further problems}
\label{further}
In Sections \ref{sol1} and \ref{sol2}, we considered the problem of solving for $k$ in the congruence
\begin{equation}\label{prob}
  n(k)_b\equiv a\pmod{m},
\end{equation}
where $n\in\mathbb{N}$, $b\geq2$ is the base, $a\in\mathbb{Z}$, and $m\in\mathbb{N}$. This problem is inspired by the fact that all the numbers \eqref{ten} are $v$-palindromes, i.e., Corollary \ref{tenc}.

Similarly, Corollary \ref{ac} inspires another problem. Let $n[k]_b$ denote the number whose base $b$ digits are those of $n$, followed by $k$ zeros, and then another $n$ again. Then we can consider the problem of solving for $k\geq0$ in the congruence
\begin{equation*}
  n[k]_b\equiv a\pmod{m}.
\end{equation*}
Conceivably, many other variations of the problem \eqref{prob} can be considered, by restricting, in Theorem \ref{general}, the palindrome $\rho$ to a special form.

Perhaps in the most general sense, we can try to solve, for a fixed $b\geq2$, the congruence
\begin{equation*}
  n\rho\equiv a\pmod{m},
\end{equation*}
for $\rho$ a palindrome in base $b$, consisting entirely of $0$'s and $1$'s and such that between any pair of consecutive $1$'s there are at least $L-1$ zeros, where $L$ is the number of base $b$ digits of $n$. This restriction on the number of $0$'s between any pair of consecutive $1$'s is imposed so that in doing the multiplication $n\rho$, ``the copies of $n$ do not overlap''. In contrast, this restriction is not imposed in Theorem \ref{general}, and we see that in the multiplication $18\rho$, ``the copies of $18$ overlap to create $9$'s''.

\subsection*{Acknowledgements}
The author is grateful to Professor Kohji Matsumoto for comments that improved the presentation of this paper.

\bibliography{refs}{}
\bibliographystyle{plain}

\end{document}